\newtheorem*{rep@theorem}{\rep@title}
\newcommand{\newreptheorem}[2]{%
\newenvironment{rep#1}[1]{%
 \def\rep@title{#2 \ref{##1}}%
 \begin{rep@theorem}}%
 {\end{rep@theorem}}}
\newtheorem{theorem}{Theorem}[section]
\newtheorem{lemma}[theorem]{Lemma}
\newtheorem{proposition}[theorem]{Proposition}
\newtheorem*{thma}{Theorem A}
\newtheorem*{thmb}{Theorem B}
\theoremstyle{definition}
\theoremstyle{remark}
\begin{document}

\title{Images of word maps in almost simple groups and quasisimple groups}
\author{\scshape{Matthew Levy}\\ \small{Imperial College London}}
\date{}
\maketitle

\begin{abstract}
$\vspace{1mm}$

\begin{minipage}{0.77\textwidth}
It has been shown by Lubotzky in \cite{Lubotzky} that the set of verbal images of a fixed non-abelian finite simple group $G$ is precisely the set of endomorphism invariant subsets of $G$. Here we use his result to determine the verbal images of certain almost simple groups and quasisimple groups.
\end{minipage}

\end{abstract}

$\vspace{1mm}$
\section{Introduction}
Let $w$ be a word in the free group of rank $k$. For any group $G$ we can define a word map $w:G^k\rightarrow G$ and we shall let $w(G)$ denote the set of word values, i.e. $w(G):=\{w(g_1,...,g_k):g_i\in G\}$, we will call this the \textit{verbal image} of $w$ over $G$. There has recently been much interest and progress in the study of verbal images over finite groups though the topic has grown from work first begun by P. Hall, see \cite{Segal} for a modern exposition. 


In \cite{KN} it is shown that for any alternating group Alt$(n)$ with $n\ge 5$ and $n\ne 6$ there exists a word $w$ such that Alt$(n)_w$ consists of the identity and all $3$-cycles. This result also holds for Sym($n$). They also construct words whose image over Alt($n$) is the identity and all $p$-cycles for any prime $3<p<n$ and $n\ge 5$. All these words are explicitly given and they go on to give other explicit examples of words whose verbal image over a given group is a single automorphism class and the identity. Other examples can also be found in \cite{Levy}.

In \cite{Lubotzky} Lubotzky has proved that any automorphism invariant subset that contains the identity of any non-ableian finite simple group can be obtained as the image of a word map in two variables. His proof requires the classification of finite simple groups (CFSG). In particular, it uses a result by Guralnick and Kantor \cite{GuralnickKantor} which asserts that any non-identity element of a finite simple group is part of a generating pair. In this article we will extend some of the arguments used by Lubotzky to make similar statements for certain almost simple groups and quasisimple groups thus some of our results also depend on the classification. Our main results are:

\begin{thma}
The verbal images of S$_n$ are either:
\begin{enumerate}
\item[i)] an Aut(S$_n$)-invariant subset of A$_n$ including the identity or;
\item[ii)] the union of an Aut(S$_n$)-invariant subset of S$_n$ and $C$, where $C$ is the set of all $2$-power elements of S$_n$.
\end{enumerate}
\end{thma}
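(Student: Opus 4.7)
The plan is to dichotomise based on the image of $w$ in the abelianisation $S_n/A_n \cong C_2$. Let $\epsilon_i$ denote the exponent sum of $x_i$ in $w$. Two features of $w(S_n)$ are automatic for any word map: first, $1 \in w(S_n)$ (set every $g_i = 1$), and second, $w(S_n)$ is $\mathrm{Aut}(S_n)$-invariant, since any automorphism $\phi$ satisfies $\phi(w(g_1,\ldots,g_k)) = w(\phi(g_1),\ldots,\phi(g_k))$. So the genuine content of the theorem is that either $w(S_n) \subseteq A_n$, or $C \subseteq w(S_n)$, according to the parities of the $\epsilon_i$.

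For case (i), suppose every $\epsilon_i$ is even. Composing with the abelianisation map $S_n \to C_2$ turns $w$ into the zero homomorphism $C_2^k \to C_2$, so every value $w(g_1,\ldots,g_k)$ lies in $A_n$. Combined with the universal features above, $w(S_n)$ is an $\mathrm{Aut}(S_n)$-invariant subset of $A_n$ containing the identity, which is exactly the conclusion of case (i).

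For case (ii), suppose some $\epsilon_i$ is odd, and let $d = \gcd(\epsilon_1,\ldots,\epsilon_k)$, which is then odd. For any $g \in S_n$ and integers $a_1,\ldots,a_k$ the elements $g^{a_i}$ pairwise commute, so $w(g^{a_1},\ldots,g^{a_k}) = g^{a_1\epsilon_1 + \cdots + a_k\epsilon_k}$. As the $a_i$ range over $\mathbb{Z}$ the exponent traces out the subgroup $d\mathbb{Z}$, so every $g^{md}$ is a value of $w$. If $g$ has $2$-power order then $\gcd(d,|g|)=1$, hence some $md \equiv 1 \pmod{|g|}$ and $g = g^{md} \in w(S_n)$. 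Thus $C \subseteq w(S_n)$, and writing $w(S_n) = w(S_n) \cup C$ exhibits it in the form of case (ii).

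There is no real obstacle here; the whole argument is driven by the elementary identity $w(g^{a_1},\ldots,g^{a_k}) = g^{\sum a_i\epsilon_i}$ inside the cyclic subgroup $\langle g\rangle$, together with a single gcd computation. The one conceptual point to watch is that case (ii) is really the inclusion $C \subseteq w(S_n)$ in disguise: since $w(S_n)$ is automatically $\mathrm{Aut}(S_n)$-invariant, one may simply take $w(S_n)$ itself (or, if a disjoint union is preferred, $w(S_n)\setminus C$) as the ``$\mathrm{Aut}(S_n)$-invariant subset'' appearing in the statement.
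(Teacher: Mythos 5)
Your argument establishes only the easy half of Theorem A. The theorem is the $S_n$ analogue of Lubotzky's characterisation quoted in the abstract (``the set of verbal images \ldots is \emph{precisely} the set of endomorphism invariant subsets''), so it is a two-way classification: not only does every verbal image have one of the two stated forms, but conversely every set of form (i) or (ii) is realised as $w(S_n)$ for some word $w$. Your proof handles only the forward direction --- that $w(S_n)$ contains the identity, is $\mathrm{Aut}(S_n)$-invariant, and either lies in $A_n$ (all exponent sums even) or contains $C$ (some exponent sum odd). That direction is correct and is exactly the one-sentence observation the paper attributes to Kassabov and Nikolov, but it is the trivial part; as you yourself note, read this way clause (ii) collapses to the bare inclusion $C \subseteq w(S_n)$, which should have signalled that the statement is meant to carry more content. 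If only the forward direction were intended, the theorem would not need Lubotzky's result at all, contradicting the stated purpose of the paper.

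The substance of the paper's proof is the converse (realisability) direction, which your proposal does not touch. For $A \subseteq A_n$ this is Theorem \ref{almostsimple} applied to $G = S_n$, whose proof runs through Lubotzky's subdirect-product machinery, Ab\'ert's lemma, and the Guralnick--Kantor generation theorem, and hence depends on the CFSG. For $A$ containing an odd permutation the paper must still produce a word whose image is \emph{exactly} $A$, not merely some $\mathrm{Aut}(S_n)$-invariant set containing $C$: it first builds a word $v$ taking prescribed values on generating pairs (using the refined form of Lubotzky's argument together with the observation that any $a \in S_n \setminus (A_n \cup C)$ lies in a pair generating $A_n$ or $S_n$), and then sets $w = x_1^{e/e_2}\, v$, where $e$ is the exponent of $S_n$ and $e_2$ its $2$-part; the prefactor forces inputs outside $A$ to land in $C$ while inputs from $A$ yield conjugates of themselves. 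None of the elementary exponent-sum reasoning in your proposal can substitute for that construction, so there is a genuine gap: you have proved a necessary condition on verbal images, but not the classification.
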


\begin{thmb}
There exists a constant $C$ with the following property: Let $S$ be a universal quasisimple group with $|S|>C$ and let $A$ be a subset of $S$ such that $e\in A$ and $A$ is closed under the action of the automorphism group of $S$. Then there exists a word $w\in F_2$ such that $w(S)=A$.
\end{thmb}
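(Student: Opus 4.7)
The plan is to adapt Lubotzky's argument for finite simple groups to the universal quasisimple setting. Write $Z = Z(S)$ and $G = S/Z$, and decompose the Aut$(S)$-invariant subset $A \ni e$ into Aut$(S)$-orbits $A = \{e\} \cup O_1 \cup \cdots \cup O_n$.

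The first step is a generation lemma: for $|S|$ sufficiently large, every non-central element of $S$ belongs to a generating pair of $S$. This reduces to Guralnick--Kantor applied to $G$, combined with the observation that any pair $(g,h) \in S^2$ projecting to a generating pair of $G$ must already generate $S$; indeed, if $H = \langle g,h \rangle$ then $HZ = S$, and since $Z$ is central we have $[H,H] = [HZ, HZ] = [S,S] = S$ by perfectness of $S$, forcing $H = S$.

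The second step produces, for each orbit $O_i \subseteq A$, a word $w_i \in F_2$ with $w_i(S) = \{e\} \cup O_i$. If $O_i$ consists of non-central elements, I would fix a representative $g \in O_i$ together with a mate $h$ satisfying $\langle g,h \rangle = S$, express $g$ as a word value $v(g,h) \in F_2$, and then follow Lubotzky's skeleton construction --- designing the surrounding word so that non-generating input pairs contribute only the identity --- to obtain $w_i$. If $O_i \subseteq Z$, the universality of $S$ (which identifies $Z$ with the Schur multiplier $H_2(G,\mathbb{Z})$) guarantees that every central element arises as the value of a commutator-type word that is trivial modulo $Z$; a suitable modification of such a word yields the required $w_i$. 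Finally, Lubotzky's combination trick merges the finitely many $w_i$ into a single word in $F_2$ whose image is the full union $A$, via free substitutions inside a rank-two free subgroup of $F_2$.

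The main obstacle is the central case in step two. Lubotzky's construction is tailored to simple targets and does not transfer directly to $Z$; one has to exploit the structure of the universal central extension and carefully track the exponent sums in each of the two variables to ensure that, when a word is lifted from $G$ to $S$, its image hits precisely the prescribed central coset rather than an uncontrolled larger one. Managing this interaction between the non-central orbits of $A$ and the central structure of $S$ --- and checking that the combination trick does not destroy this control --- is the delicate part of the argument.
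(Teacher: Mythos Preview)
Your outline diverges from the paper's argument in a way that leaves two real gaps.

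First, the paper does \emph{not} build a separate word $w_i$ for each orbit $O_i$ and then combine them. It works with all orbits at once: one considers the homomorphism $\phi:F_2\to\prod_{\Omega}S$ indexed by all pairs in $S$, lifts Lubotzky's simple-group result for $\bar S=S/Z$ via a commutator argument (perfectness of $S$) to show that $\phi(F_2)$ contains a subgroup $D\cong S^{r}$, where $r$ is the number of $\mathrm{Aut}(S)$-orbits of generating pairs that are distinct modulo $Z$. The single element $v\in D$ whose $i$-th diagonal block is a chosen representative $a_i$ of $O_i$ (for $i\le k$) and identity elsewhere then yields one word $w$ with $w(S)=A$. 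The entire weight of the ``large $|S|$'' hypothesis lies in verifying the inequality $r\ge k$, which the paper does by playing bounds on $k(S)$ (Fulman--Guralnick, Mar\'oti) against generation-probability estimates (Liebeck--Shalev, Guralnick--Kantor). This is the substantive step, and your proposal does not contain it.

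Second, the two places where your sketch is incomplete are precisely the places the paper's approach avoids. You acknowledge that the central case is the obstacle; in the paper it disappears, because whether $a_i$ is central or not is irrelevant to placing it in the $i$-th block of $v\in D$. More seriously, the ``combination trick'' you invoke does not exist in the form you need: given words $w_1,\dots,w_n$ with $w_i(S)=\{e\}\cup O_i$, there is no general mechanism producing a word with image $\bigcup_i w_i(S)$. Taking products in disjoint variables gives $\prod_i w_i(S)$, which typically overshoots $A$; substituting into a rank-two free subgroup only reduces the number of variables, it does not control the image. Lubotzky's original argument already produces the word for the full $A$ in one stroke, and the paper's quasisimple version does the same.

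So the missing idea is the counting inequality $r\ge k$ and the realisation that it lets you handle all orbits---central or not---simultaneously, rather than orbit-by-orbit.
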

\section{Almost simple groups}

Let $G$ be an almost simple $G$, i.e. we have $S\leq G\leq $ Aut($S$) where $S$ is the unique minimal normal non-abelian simple characteristic subgroup of $G$. A well-known corollary of the CFSG is that Aut$(S)/S$ is solvable hence so is $G/S$. The group $G$ also has the property that any subgroup not containing $S$ doesn't contain $S$ as a composition factor, that is if $H\leq G$ with $S\not\leq H$ then $S\notin$ Comp($H$), where Comp($H$) is the set of composition factors of $H$. To see this observe that if $H\leq G$ with $S\in$ Comp($H$) then since $H/H\cap S$ is solvable we must have that $S\in$ Comp($H\cap S$) thus $S=H\cap S$ giving $S\leq H$. Now suppose that $G$ $\unlhd$ Aut($S$) then we have Aut($K)\leq$ Aut($G)=$ Aut$(S)$ for any $S\leq K\leq G$. For the remainder of this section we fix an almost simple group $G$ with $S\leq G$ $\unlhd$ Aut($S$) where $S$ is a non-abelian finite simple group. We will combine arguments of Ab\'{e}rt in \cite{Abert} and Lubotzky in \cite{Lubotzky} to prove the following theorem:


\begin{theorem}\label{almostsimple}
Let $A$ be a subset of $S$ such that $e\in  A$ and $A$ is closed under the action of the automorphism group of $G$ where $G$ and $S$ are as above. Then there exists a word $w\in F_2$ such that $w(G)=A$.
\end{theorem}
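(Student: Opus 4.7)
The plan is to bootstrap Lubotzky's simple-group theorem by first forcing the two $G$-variables into $S$ via derived-word tricks, and then feeding the resulting pair of $S$-values into Lubotzky's word.

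First I would use the observation from the preamble that Aut$(G)=$ Aut$(S)$, so the hypothesis that $A$ is Aut$(G)$-invariant is equivalent to $A$ being Aut$(S)$-invariant. Since $e\in A$, Lubotzky's theorem applied to $S$ supplies a word $w_0(x,y)\in F_2$ with $w_0(S)=A$.

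Next I need words $u(x,y), v(x,y)\in F_2$ with two properties: (a) they evaluate into $S$ on all of $G^2$, and (b) as $(g_1,g_2)$ ranges over $G^2$, the pair $(u(g_1,g_2),v(g_1,g_2))$ sweeps out enough of $S\times S$ to realise every value that Lubotzky's construction of $w_0$ needs. For (a) I would exploit the fact, recorded in the paper's preamble, that $G/S$ is solvable: if $d$ is its derived length, then any word lying in $F_2^{(d)}$ maps $G^2$ into $G^{(d)}$, and since $S$ is perfect and normal in $G$ one checks $G^{(d)}=S$. So taking $u$ and $v$ to be suitable iterated commutators in $F_2^{(d)}$ already secures $u(G^2),v(G^2)\subseteq S$. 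The candidate word is then $w(x,y) := w_0(u(x,y), v(x,y))$; by construction $w(G)\subseteq w_0(S)=A$, so the only remaining point is the reverse inclusion.

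The main obstacle is property (b): showing that one can choose a single pair $(u,v)\in F_2^{(d)}\times F_2^{(d)}$ so that the induced map $G^2\to S^2$ is surjective, or more modestly so that its image hits each of the generating pairs of $S$ used in Lubotzky's proof (one for each $a\in A$, supplied by Guralnick--Kantor \cite{GuralnickKantor}). This is where I expect Ab\'ert's arguments from \cite{Abert} to enter: his technique for controlling verbal images in products of nonabelian finite simple groups should let one produce enough freedom inside $G^2$, using that $S$ is simple and $2$-generated, to realise the required pairs by derived words. If a clean surjection $G^2\twoheadrightarrow S^2$ turns out to be too strong, the fall-back is to re-run Lubotzky's construction internally, substituting $u$ and $v$ at the two slots where he uses $x,y$, and checking that his characteristic-function argument still separates the relevant Aut$(S)$-orbits; the hypothesis that $A$ is closed under Aut$(G)=$ Aut$(S)$ is exactly what keeps those orbit arguments intact.
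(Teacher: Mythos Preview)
Your reduction strategy---push the variables into $S$ with derived words $u,v\in F_2^{(d)}$ and then plug into Lubotzky's $w_0$---has a real gap at exactly the point you flag. For the composite $w_0(u,v)$ to hit every element of $A$, the image of $(u,v):G^2\to S^2$ must contain, for each Aut$(S)$-orbit in $A$, at least one of the Guralnick--Kantor generating pairs that $w_0$ was built around. But for fixed words $u,v\in F_2^{(d)}$ the map $(u,v)$ sends each Aut$(G)$-orbit in $G^2$ to a \emph{single} Aut$(G)$-orbit in $S^2$, so you are asking a specific self-map on a finite orbit space to be surjective (or nearly so). Nothing in Ab\'ert's paper produces such a pair of two-variable words, and there is no reason to expect one to exist in general; your appeal to \cite{Abert} here is not substantiated. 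The fall-back (``re-run Lubotzky internally with $u,v$ in the slots'') does not help, because Lubotzky's characteristic-function argument needs the evaluation map $F_2\to\prod_\Omega S$ to have large image, and precomposing with a fixed $(u,v)$ can only shrink it.

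The paper's proof does not reduce to Lubotzky's theorem for $S$ at all. It reproves Lubotzky's argument directly over $G$: one enumerates all pairs $\Omega\subset G^2$, splits off those with $\langle a,b\rangle\supseteq S$, and studies the image of $F_2$ in $\prod_\Omega G$. The role of Ab\'ert's work is Lemma~\ref{lemmaabert}: a subdirect-product statement saying that if the coordinate pairs each generate a subgroup containing $S$ and are pairwise Aut$(G)$-independent, then the subgroup they generate in $\prod G_j$ contains $\prod S_j$. This gives a copy of $S^r$ inside $\phi(F_2)$ (with $r=l/|\mathrm{Aut}(G)|$), and since $A\subseteq S$ one can now write down the target vector $v$ and pull it back to a word exactly as in Lubotzky. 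So Ab\'ert is used to control subdirect products of copies of $G$, not to manufacture words $u,v$ landing in $S$.
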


Lubotzky's proof involves studying subdirect products of simple groups. We would like to generalise some of his argument to work for almost simple groups. This is where we use a result of Ab\'{e}rt where he studied such subdirect products, more specifically he looked at subdirect products of just non-solvable groups. The following lemma is due to Ab\'{e}rt (\cite{Abert}) but has been adapted:

\begin{lemma}\label{lemmaabert}
Consider $P=G_1\times ...\times G_n$ where each $G_i\cong G$. Denote by $S_j$ the minimal normal subgroup of $G_j$ so that each $S_j$ is isomorphic to $S$.
Now let $a_{i,j}\in G_j$ for $1\leq i\leq k$ and $1\leq j\leq n$, and suppose that
\begin{align*}
	<a_{1,j},...,a_{k,j}>&= G_j\text{ for }1\leq j\leq s;\\
	<a_{1,j},...,a_{k,j}>=B_j&\geq S_j\text{ for }s< j\leq n,
\end{align*}
and that for $1\leq j<l\leq n$ the $k$-tuples $(a_{1,j},...,a_{k,j})$ and $(a_{1,l},...,a_{k,l})$ are automorphism independent over $G$. For $1\leq i\leq k$ let
$$
h_i=(a_{i,1},...,a_{i,n})\in G_1\times...\times G_n
$$
and let
$$
H=<h_1,...,h_k>\leq G_1\times...\times G_n.
$$
Then
$$
M=S_1\times ...\times S_n \leq H.
$$
\end{lemma}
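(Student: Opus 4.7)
The plan is to induct on $n$. The base case $n=1$ is immediate, since $H = B_1 \geq S_1$. For the inductive step, let $\pi_n \colon H \to B_n$ be the projection to the last coordinate, let $K = \ker \pi_n$ regarded as a subgroup of $P' = G_1\times\cdots\times G_{n-1}$, and let $H'$ be the image of $H$ in $P'$. The projected generators still satisfy the hypotheses for $n-1$ factors, so by induction $M' := S_1\times\cdots\times S_{n-1} \leq H'$. Since $K \trianglelefteq H$, its image in $P'$ is normalised by $M'$; as the normal subgroups of a direct product of non-abelian simple groups are subproducts, $K \cap M' = \prod_{j \in J} S_j$ for some $J \subseteq \{1,\ldots,n-1\}$.

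The crux is to show $J = \{1,\ldots,n-1\}$, and I would argue by contradiction. If some $j_0 \notin J$, project $H$ onto $G_{j_0}\times G_n$ to obtain a subgroup $H^\ast$ surjecting onto both $B_{j_0}$ and $B_n$, and apply Goursat's lemma to produce normal subgroups $N_{j_0} \trianglelefteq B_{j_0}$, $N_n \trianglelefteq B_n$ and an isomorphism $\phi \colon B_{j_0}/N_{j_0} \to B_n/N_n$ with $\phi(a_{i,j_0} N_{j_0}) = a_{i,n} N_n$ for every $i$. The choice $j_0 \notin J$, combined with the structural fact recalled at the start of the section---any subgroup $N \leq G$ with $S \not\leq N$ has $S \notin \mathrm{Comp}(N)$ and hence, embedding into the solvable group $G/S$, is itself solvable---forces $N_{j_0}$ and $N_n$ to be solvable with $N_{j_0} \cap S_{j_0} = N_n \cap S_n = 1$. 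Consequently $S_{j_0} N_{j_0}/N_{j_0}$ is the unique minimal non-abelian normal subgroup of $B_{j_0}/N_{j_0}$, so $\phi$ must carry it onto $S_n N_n/N_n$; using $\mathrm{Aut}(K) \leq \mathrm{Aut}(G)$ for every $S \leq K \leq G$, this restriction of $\phi$ lifts to an automorphism of $G$ identifying the $j_0$-th tuple with the $n$-th one modulo the solvable subgroup $N_n$, contradicting the assumed automorphism-independence.

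Once $J = \{1,\ldots,n-1\}$ is established, $M' \times \{e\} \leq H$. Re-running the entire argument with the distinguished coordinate being $1$ rather than $n$ produces $\{e\} \times S_2 \times \cdots \times S_n \leq H$, and the product of these two subgroups is all of $M$, so $M \leq H$ as required. The main obstacle lies in the middle step: descending the abstract Goursat isomorphism $\phi$---which lives between quotients containing solvable ``noise'' $N_{j_0}$ and $N_n$---to an honest automorphism of $G$ of the form explicitly excluded by Ab\'ert's notion of automorphism-independence. This is precisely where the solvability of $N_{j_0}$ and $N_n$, supplied by the almost-simple hypothesis, together with the lifting $\mathrm{Aut}(K) \leq \mathrm{Aut}(G)$, do the heavy lifting.
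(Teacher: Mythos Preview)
Your overall strategy matches the paper's: induct on $n$, use the inductive hypothesis on the first $n-1$ coordinates, and derive a contradiction with automorphism-independence via a Goursat-type identification of two coordinates. The paper phrases things dually (it first pins down $S_n$ in the last slot rather than $M'$ in the first $n-1$ slots, and finishes with a derived-series trick instead of re-running on coordinate $1$), but these differences are cosmetic.

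The genuine gap is in your middle step. First, from $j_0\notin J$ you want $N_{j_0}$ not to contain $S_{j_0}$; merely knowing $S_{j_0}\not\leq K\cap M'$ does not give this directly. You need the commutator argument of Lemma~\ref{Abert1}: if $\pi_{j_0}(K)\neq 1$ then $\pi_{j_0}(K)\geq S_{j_0}$ by minimality, whence $S_{j_0}=[S_{j_0},S_{j_0}]\leq[K,S_{j_0}]\leq K\cap M'$, forcing $j_0\in J$. This in fact yields the stronger conclusion $N_{j_0}=\pi_{j_0}(K)=1$. More seriously, your ``lifting modulo $N_n$'' does not deliver what automorphism-independence excludes: at best you would obtain $\alpha\in\mathrm{Aut}(G)$ with $\alpha(a_{i,j_0})\equiv a_{i,n}\pmod{N_n}$, not $\alpha(a_{i,j_0})=a_{i,n}$, and for generators $a_{i,j_0}\notin S_{j_0}$ even this congruence is unclear since your lift is defined only via the restriction of $\phi$ to the $S$-level. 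The fix is to push further and observe that $N_{j_0}$ and $N_n$ are not merely solvable but \emph{trivial}: once $N\unlhd B$ with $S\leq B\leq G\leq\mathrm{Aut}(S)$ and $N\cap S=1$, normality of both $N$ and $S$ in $B$ gives $[N,S]\leq N\cap S=1$, so $N\leq C_G(S)=1$. With $N_{j_0}=N_n=1$, Goursat hands you an honest isomorphism $\alpha\colon B_{j_0}\to B_n$ sending $a_{i,j_0}\mapsto a_{i,n}$ exactly, and then $\mathrm{Aut}(B_{j_0})\leq\mathrm{Aut}(G)$ yields the contradiction. This is precisely what the paper does, implicitly, when it argues that $K=\ker(\pi_l)$.
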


Before proceeding with the proof of the lemma we first note that, setting $n=s+t$, the above lemma holds in the case where $t=0$; this is proved in \cite{Abert}. We also recall the following from \cite{Abert} though it is not stated in this way:

\begin{lemma}\label{Abert1}
Let $H\leq P$ be a subgroup containing $M$ such that
\begin{align*}
\pi_j(H)&=G_j\text{ for }1\leq j\leq s;\\
\pi_j(H)&\geq S_j\text{ for }s< j\leq n,
\end{align*}
where $\pi_j$ denotes the projection to the $j$-th component. Let $K$ be a normal subgroup of $H$. Then
$$
K\cap M=\prod_{\pi(K)\neq 1}S_j.
$$
\end{lemma}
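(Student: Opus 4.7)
The plan is to prove the two inclusions separately. The containment $K\cap M\subseteq \prod_{\pi_j(K)\neq 1}S_j$ is immediate from the definitions: if $\pi_j(K)=1$ then every element of $K$, and in particular every element of $K\cap M$, has trivial $j$-th coordinate, so $K\cap M$ is supported on the indices where $\pi_j(K)\neq 1$.

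For the reverse inclusion, I would show that whenever $\pi_j(K)\neq 1$ the whole subgroup $S_j$ (viewed as the natural subgroup of $M$ supported on the $j$-th coordinate) is already contained in $K$. Two preliminary observations set this up. First, $S_j$ is normal in $H$: for $h\in H$ and $s\in S_j$, conjugation of $s$ by $h$ acts on the $j$-th coordinate via $\pi_j(h)\in\pi_j(H)\leq G_j$, which normalises $S_j$ since $S_j\triangleleft G_j$, while every other coordinate stays trivial. Consequently $K\cap S_j$ is normal in $H$, and in particular in $S_j$. Second, since $G_j$ is almost simple with socle $S_j$, we have $C_{G_j}(S_j)=1$; indeed $G_j$ embeds in $\mathrm{Aut}(S_j)$ and so acts faithfully on $S_j$ by conjugation.

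Now pick $k\in K$ with $\pi_j(k)\neq 1$. By the centraliser observation there is some $s\in S_j$ with $[s,\pi_j(k)]\neq 1$. Regarding $s$ as the element of $M\leq H$ supported on the $j$-th coordinate and using $K\triangleleft H$, the commutator $[s,k]$ lies in $K$; it has trivial $i$-th coordinate for $i\neq j$ (because $\pi_i(s)=1$) and $j$-th coordinate $[s,\pi_j(k)]\neq 1$, so it is a non-trivial element of $K\cap S_j$. Since $K\cap S_j$ is a normal subgroup of the simple group $S_j$, we conclude $K\cap S_j=S_j$, hence $S_j\leq K$. Doing this for every $j$ with $\pi_j(K)\neq 1$ yields $\prod_{\pi_j(K)\neq 1}S_j\leq K\cap M$, completing the proof.

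The only point that requires genuine thought is the centraliser computation $C_{G_j}(S_j)=1$, which is what unlocks the commutator $[s,\pi_j(k)]$; the remainder is mechanical bookkeeping with coordinates that relies on the hypothesis $M\leq H$ (to ensure $s$ lives in $H$) and on $K\triangleleft H$ (to ensure $[s,k]\in K$).
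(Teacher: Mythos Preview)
Your proof is correct and follows essentially the same strategy as the paper: both arguments reduce to showing that once $\pi_j(K)\neq 1$ one can produce enough commutators $[s,k]$ with $s\in S_j\leq M\leq H$ and $k\in K$ to force $S_j\leq K$. The paper packages this slightly differently---it first notes that $K\cap M\trianglelefteq M$ must be a subproduct of the $S_j$'s, then uses that $\pi_j(K)\trianglelefteq\pi_j(H)$ together with minimality of $S_j$ in $\pi_j(H)$ to get $S_j\leq\pi_j(K)$ and hence $[K,S_j]=[\pi_j(K),S_j]=S_j$---whereas you invoke $C_{G_j}(S_j)=1$ to find a single nontrivial commutator and then appeal to simplicity; but these are minor presentational variants of the same idea.
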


\begin{proof}
Since $K\cap M$ is normal in  $M=\prod_{1\leq j\leq n}S_j$ it is the direct product of some of the $S_j$, say 
$$
K\cap M=K_1\times...\times K_n,
$$ 
where $K_i=S_i$ or $1$.

If $\pi_j(K)=1$ then $K\cap S_j=1$ so $K_j=1$.

If $\pi_j(K)\neq 1$ then $\pi_j(K)\unlhd \pi_j(H)$ so $S_j\leq\pi_j(K)$ by minimality of $S_j$. Thus
$$
K\cap M\geq[K,M]\geq[K,S_j]=[\pi_j(K),S_j]=S_j,
$$
so $K_j=S_j$.
\end{proof}

\begin{proof}[Proof of Lemma \ref{lemmaabert}]
Consider the projection to the first $n-1$ coordinates:
$$
f:G_1\times...\times G_s\times B_{s+1}\times...\times B_n\rightarrow G_1\times...\times G_s\times B_{s+1}\times...\times B_{n-1}.
$$
Let $H_1=f(H)$ and let $R=\pi_n(\text{Ker}(f))\unlhd B_n$. By induction on $n$ and the above remark, we have $S_1\times...\times S_{n-1}\leq H_1$ and by minimality of $S$ either $R\geq S_n$ or $R=1$.

We claim that $S_n\leq R$. Assume, for contradiction, that $R=1$. Define the map $\theta:H_1\rightarrow B_n$ by
$$
\theta(g_1,...,g_{n-1})=g_n\text{ if }(g_1,...,g_{n-1},g_n)\in H.
$$
Since $(g_1,...,g_{n-1},g_n)$, $(g_1,...,g_{n-1},g_n')\in H$ implies $g_n^{-1}g_n'\in R$, $\theta$ is a well-defined, surjective homomorphism. We are also using the fact that $<a_{1,n},...,a_{k,n}>=B_n$. Now let $K=$ Ker$(\theta)\unlhd H_1$ and so
$$
H_1/K\cong B_n\unrhd S_n,
$$
which is not solvable and has $S$ as a composition factor. Since $S_1\times...\times S_{n-1}\leq H_1$, by lemma \ref{Abert1}
$$
K\cap M=\prod_{\pi(K)\neq 1} S_j.
$$
If $K\geq M$ then $H_1/K$ would be solvable, a contradiction. So there exists a coorodinate $1\leq l <n$ such that $\pi_l(K)=1$, i.e. $K\leq$ Ker($\pi_l$). Since
$$
H_1/\text{Ker}(\pi_l)\cong B_l\unrhd S_l
$$
is a quotient of $H_1/K$ with $S$ as a composition factor we must have $K=$ Ker($\pi_l$). This means that the function $\alpha: B_l\rightarrow B_n$ given by
$$
\alpha(g_l)=g_n\text{ if }(g_1,...,g_l,...,g_n)\in H
$$
is an isomorphism, i.e. $B_l\cong B_n\cong B$, say. Thus $\alpha(a_{i,l})=a_{i,n}$ for ($1\leq i\leq k$) which in turn implies that the tuples $(a_{1,l},...,a_{k,l})$ and $(a_{1,n},...,a_{k,n})$ are not automorphism independent over $B$ and hence $G$, a contradiction. Hence the claim holds and $1\times...\times1\times S_n\leq H$.

Now let $L=f^{-1}(S_1\times...\times S_{n-1})\leq H$ and denote by $L^{(i)}$ the $i$-th term of the derived series of $L$ and let $r$ be a number such that $L^{(r)}=L^{(r+1)}$. Then $f(L^{(r)})=S_1\times...\times S_{n-1}$ and since $1\times...\times 1\times S_n\leq L$ it follows that $1\times...\times 1\times S_n\leq L^{(r)}$. But $J=\pi_n(L^{(r)})\unlhd B_n$ and $J=J'$ and so $J=S_n$. Finally, this implies that
$$
L^{(r)}=S_1\times...\times S_n\leq H.
$$
\end{proof}

We will now follow Lubotzky's argument making use of the above lemma along the way.

\subsection{Proof of \ref{almostsimple}}
Let $\Omega=\{(a_i,b_i):i=1,...,|G|^2\}$ be the set of all ordered pairs of elements from $G$ such that the first $l$ pairs generate a subgroup of $G$ containing $S$ and the remaining pairs generate proper subgroups of $G$ not containing $S$ and thus don't contain $S$ as a composition factor. Consider the homomorphism from $F_2=<x,y>$, the free group on two letters, to the direct product of $|\Omega|$ copies of $G$, $\phi:F_2\rightarrow\prod_{\Omega}G$ where $\phi=\prod_{\Omega}\phi_i$ and each $\phi_i$ is given by the unique homomorphism from $F_2$ to $G$ sending $x$ to $a_i$ and $y$ to $b_i$. Write $\Omega$ as the disjoint union of $\Omega_1$ and $\Omega_2$ where $\Omega_1$ is the set of the first $l$ `generating' pairs in $\Omega$ and $\Omega_2$ is the set of the remaining `non-generating' pairs and write $G_i=\prod_{\Omega_i}G$ accordingly. Set $r_1=(a_1,...a_l)$ and $r_2=(b_1,...,b_l)$ and consider the subgroup of $G_1$ generated by $r_1$ and $r_2$, $R=<r_1,r_2>$. Then by lemma \ref{lemmaabert}, $R$ contains a subgroup $E_1$ isomorphic to $S^r$ for some $r$ depending on the number of automorphism independent orbits of $\Omega_1$. It turns out that $r=\frac{l}{|\text{Aut}(G)|}$:

To see this note that an element $c=(c_1,...,c_l)$ is in $E_1$ if and only if whenever $\alpha\circ\phi_i=\phi_j$ for some $1\leq i,j\leq l$ and $\alpha\in$ Aut($G$), $\alpha(c_i)=c_j$. Now, the group Aut($G$) acts freely on the pairs in $\Omega_1$ and similarly on the set of homomorphims $\{\phi_i:i:=1,...,l\}$. Indeed, if $\alpha\in$ Aut($G$) and $\alpha\circ\phi_i=\phi_j$ (or equivalently, $(\alpha(a_i),\alpha(b_i))=(a_i,b_i))$ then $\alpha$ is the identity automorphism of $G$. It follows that the $l$ epimorphisms form $r=l/|$Aut$(G)|$ orbits. 

Fix $r$ representatives for the orbits of $\Omega_1$ and denote this set by $\Omega_r$. Now we have to worry about the remaining `non-generating' pairs. Set $h_1=(a_i)$ and $h_2=(b_i)$ and let $H=<h_1,h_2>$. Consider the subgroup $D$ of $H$ whose projection to $G_1$ is $E_1$. We claim that this subgroup is of the form $E_1\times E_2$ where $E_2$ is its projection to $G_2$. Let $K_i$ denote the kernel of the projection from $D$ to $G_i$ for each $i$ so that $D/K_i\cong E_i$. Now $K_1$ is a subgroup of $E_2$ whose projection to every single copy of $G$ in $E_2$ is a proper subgroup of $G$ hence $K_1$ has no composition factor isomorphic to $S$. Since $E_1\cong D/K_1$ is isomorphic to $S^r$ we must have that $E_1=K_2$. This is because $K_2$ is a subgroup of $E_1$ and both are isomorphic to $S^r$ as every one of the $r$ composition factors isomorphic to $S$ should appear in $K_2$ as $E_2\cong D/K_2$ has no composition factor isomorphic to $S$. Thus $D=E_1\times E_2$ where $E_1$ is a diagonally, twisted by automorphisms, embedded subgroup of $G_1$ isomorphic to $S^r$. We have proved the following:

\begin{proposition}\label{propsubdirectalmostsimple}
With the notation as above we have
$$
H\geq D=E_1\times E_2=\prod_{(a,b)\in\Omega_r}D(S,(a,b))\times E_2\leq \prod_{\Omega}G.
$$
Here $D(S,(a,b))$ is a diagonal subgroup of $\prod_{\Omega_{(a,b)}^{\text{Aut}(G)}}G$ isomorphic to $S$ where $\Omega_{(a,b)}^{\text{Aut}(G)}$ is the orbit of the pair $(a,b)\in\Omega$ under Aut($G$). More specifically, $D(S,(a,b))=\{(g^{\phi})_{\phi\in\text{Aut}(G)}:g\in S\}$.
\end{proposition}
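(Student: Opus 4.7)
The plan is to assemble the three ingredients already visible in the discussion preceding the proposition into a clean argument: (i) Lemma \ref{lemmaabert} applied to the `generating' block $\Omega_1$ yields a copy of $S^r$ inside the projection $R$ of $H$ to $G_1$; (ii) a freeness argument for the Aut$(G)$-action on $\Omega_1$ identifies $r$ with $l/|\text{Aut}(G)|$ and displays $E_1$ as a product of twisted diagonals; (iii) a composition-factor argument splits the preimage of $E_1$ in $H$ as a direct product $E_1 \times E_2$.

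For (i) and (ii), I would restrict attention to $\Omega_1$, whose pairs by construction generate subgroups of $G$ containing $S$. After ordering $\Omega_1$ so that distinct Aut$(G)$-orbits correspond to automorphism-independent $2$-tuples over $G$, Lemma \ref{lemmaabert} produces $M_1 = S_1 \times \cdots \times S_l \leq R$. To count orbits I would show Aut$(G)$ acts freely on $\Omega_1$: if $\alpha \in $ Aut$(G)$ fixes a pair $(a,b)$ with $\langle a,b\rangle \geq S$, then $\alpha$ fixes $S$ pointwise; since Aut$(G)=$ Aut$(S)$ (as noted in the section preamble), this forces $\alpha = 1$. Hence $\Omega_1$ splits into exactly $r = l/|\text{Aut}(G)|$ free orbits, and picking one representative per orbit gives $\Omega_r$. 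The subgroup of $M_1$ whose coordinates in each orbit are linked by the appropriate automorphism twist is precisely $E_1 = \prod_{(a,b) \in \Omega_r} D(S,(a,b))$, with $D(S,(a,b)) = \{(g^\phi)_{\phi \in \text{Aut}(G)} : g \in S\}$ as described in the statement.

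For (iii), I would let $D \leq H$ be the preimage of $E_1$ under the projection to $G_1$, set $E_2 = \pi_{G_2}(D)$, and write $K_i$ for the kernel of the projection $D \to G_i$. Every projection of $K_1$ onto a coordinate of $\Omega_2$ lies in a proper subgroup of $G$ not containing $S$, and by the observation recorded in the section preamble such subgroups do not have $S$ as a composition factor; hence $K_1 \leq E_2$ carries no $S$-composition factor. On the other hand $D/K_1 \cong E_1 \cong S^r$ contributes exactly $r$ composition factors isomorphic to $S$, and since $E_2$ contributes none these must all be accounted for inside $K_2 \leq E_1$. As $E_1 \cong S^r$ has only $r$ such factors, $K_2 = E_1$. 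Finally, $K_1 \cap K_2 = 1$ because $D$ embeds faithfully into $G_1 \times G_2$, and a comparison of orders yields $D = K_1 K_2 = E_1 \times E_2$.

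The heavy lifting has already been done by Lemma \ref{lemmaabert}; the only real obstacle is the bookkeeping, namely choosing the enumeration of $\Omega_1$ so that the automorphism-independence hypothesis of that lemma is genuinely satisfied on each orbit, and checking that the composition-factor count is tight enough to force $K_2 = E_1$ rather than a proper subproduct. Both are routine given the setup already in place.
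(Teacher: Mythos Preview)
Your overall strategy matches the paper's, but step (i) contains a genuine error. You write that Lemma \ref{lemmaabert} ``produces $M_1 = S_1 \times \cdots \times S_l \leq R$''. It does not: the lemma requires the $k$-tuples to be \emph{pairwise} automorphism independent, and two generating pairs lying in the same Aut$(G)$-orbit are, by definition, automorphism dependent. In fact $R$ cannot contain $S_1 \times \cdots \times S_l$, because if $(a_j,b_j)=(a_i^{\alpha},b_i^{\alpha})$ for some $\alpha\in\mathrm{Aut}(G)$ then every element $(c_1,\dots,c_l)\in R$ satisfies $c_j=c_i^{\alpha}$; the $j$-th and $i$-th coordinates are locked together and $R$ cannot contain, say, $(1,\dots,1,s,1,\dots,1)$ with $s\neq e$ in the $j$-th slot alone.

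The correct application---which is what the paper does---is to project $R$ onto the coordinates indexed by $\Omega_r$ (one representative per orbit). Those $r$ pairs \emph{are} pairwise automorphism independent, so Lemma \ref{lemmaabert} yields $\prod_{\Omega_r} S$ inside that projection. Because the remaining coordinates in each orbit are determined by the representative via the relevant automorphism, the preimage of this $S^r$ in $R$ is precisely $E_1=\prod_{(a,b)\in\Omega_r} D(S,(a,b))$, the product of twisted diagonals you describe. So the conclusion $E_1\leq R$ is right; only the route through a nonexistent $M_1\leq R$ is wrong.

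Your freeness argument in (ii) and the composition-factor/order argument in (iii) are correct and coincide with the paper's proof.
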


We continue with Lubotzky's proof: A result of Guralnick and Kantor (see \cite{GuralnickKantor}) says that for every non-abelian finite simple group $S$ and for every $e\neq a\in S$ there exists $b\in S$ such that $a$ and $b$ generate $S$. Now suppose the set $A'=A\setminus\{e\}< S$ is a union of $k$ Aut($G$)-orbits and  let $T=\{z_1,...,z_k\}$ be a set of representatives for these orbits. It follows that for every $z\in T$ there exists an $1\leq i\leq l$ such that $z=a_i$ with $(a_i,b_i)\in\Omega_1$ and observe that the orbit of $z$ under Aut$(G)$ gives an orbit of $(a_i,b_i)$. It is clear that $k\leq r$.

Define the following element $v=(v_i)\in G_1\times G_2$ by
$$
v_i =
\begin{cases}
a_i & \text{if } a_i\in A'\text{ and }1\leq i\leq l; \\
e & \text{otherwise.}
\end{cases}
$$
It is easy to see that the above element $v$ is an element of $H$. Indeed its projection to $G_2$ is the identity and its projection to $G_1$ lies in $E_1$ by definition and the fact that $A$ is Aut($G$)-invariant. We also observe that the Guralnick-Kantor result ensures that every element of $A$ appears as a coordinate entry of $v$. This means that there exists an element $w\in F_2$ such that its image in $H=\phi(F_2)$ is $\phi(w)=\prod_{\Omega}\phi_i(w)=(w(a_i,b_i))=v$. This word $w$ has the required property, i.e. $w(G)=A$.

\subsection{Verbal images of Symmetric Groups}

As remarked by Lubotzky in \cite{Lubotzky} his proof actually shows, for example, the existence of a word with the following property: 
$$
w(a,b) =
\begin{cases}
v(a,b)& \text{if } <a,b>=G; \\
e & \text{otherwise,}
\end{cases}
$$
where $v$ is any fixed word and $G$ a simple group. In fact, we could choose a different word $v$ for each distinct Aut$(G)$-orbit of the set of pairs of generators. We will make use of this remark as well as the above theorem to classify the verbal images of S$_n$.

For now we will fix $n\geq 5$. As remarked by Kassabov and Nikolov in \cite{KN} if the image of a word $w$ over S$_n$ contains an element outside of A$_n$ then it contains all elements of $2$-power order. To see this note that to contain an element outside A$_n$ the word must have a  free variable with odd exponent sum, i.e. the verbal image of $w$ contains the verbal image of $x^{2m+1}$ for some integer $m$. Let $C$ denote the set of all elements of $2$-power order in S$_n$ including the identity and let $A$ be an Aut(S$_n$)-invariant subset of S$_n$. If $A\subseteq$A$_n$ then the above theorem shows that there exists a word with image precisely $A$. Suppose that $A$ contains an element outside of A$_n$. It follows that $A$ must contain $C$. We claim that any such $A$ can be the image of a word map. Choose a word $v(x_1,x_2)\in F_2$ with the following property:
$$
v(a,b) =
\begin{cases}
a & \text{if } a\in\text{ A}_n,\text{ }(\text{o}(a),2)=1\text{ and }a\in A; \\
a^{-e/e_2}a & \text{if } a\in\text{ A}_n,\text{ }(\text{o}(a),2)\neq 1\text{ and }a\in A; \\
a^{\text{o}(a)_2} & \text{if } a\in\text{S}_n\setminus\text{ A}_n\text{ and }a\in A; \\
1 & \text{otherwise,}
\end{cases}
$$
where $e$ is the exponent of S$_n$, $n_2$ is the 2-part of an integer $n$ and o($g$) is the order of $g\in$ S$_n$. We are also writing $1$ for the identity of S$_n$. Such a word exists by theorem \ref{almostsimple} and noting that any $a\in$ S$_n\setminus($A$_n\cup C$) can generate S$_n$ or A$_n$ since by the Guralnick and Kantor result there exists a $b$ such that $<a^2,b>=$A$_n$ and so $<a,b>=$S$_n$ or A$_n$. Also recall that Aut(A$_n$)=Aut(S$_n$) and note that for any $a\in$ S$_n$, $a^{o(a)_2}\in$ A$_n$. Define $w(x_1,x_2)=x_1^{e/e_2}v$ where $v$ is as above. Then the image of $w$ is precisely $A$ since:
$$
w(a,b) =
\begin{cases}
a & \text{if } a\in\text{ A}_n,\text{ }(\text{o}(a),2)=1\text{ and }a\in A; \\
a & \text{if } a\in\text{ A}_n,\text{ }(\text{o}(a),2)\neq 1\text{ and }a\in A; \\
a' & \text{if } a\in\text{S}_n\setminus\text{ A}_n\text{ and }a\in A; \\
\in C & \text{if }a\notin A.
\end{cases}
$$
where $a'=a^{e/e_2+\text{o}(a)_2}$ is S$_n$-conjugate to $a$ as ($e/e_2+$o$(a)_2$,o($a$))=1.\\

This completes the proof of Theorem A.

\section{Quasisimple groups}

Recall the result of Guralnick and Kantor that says for every finite simple group $\bar{S}$ and for every $e\neq a\in\bar{S}$ there exists $b\in\bar{S}$ such that $a$ and $b$ generate $\bar{S}$. Now for a quasisimple group $S$ we have that for every non-central element $a\in S$ there exists $b\in S$ such that $a$ and $b$ generate $S$. This follows from the easy fact that if a set of elements generate a perfect group modulo its centre then they in fact generate the group, i.e. if $M\leq S$ and $M$Z$(S)=S$ then $M=S$. We will now fix a quasisimple group $S$ that is universal for a simple group $\bar{S}$, i.e. $\bar{S}=S/$Z$(S)$ and $S$ is the unique largest such central extension of $\bar{S}$. The existence of such a $S$ follows from facts about the Schur multiplier of $\bar{S}$ and $S$ is also known as the Schur covering group of $\bar{S}$. In this case Aut$(\bar{S})$ is equal to the group of automorphisms of $\bar{S}$ induced from Aut($S$). In particular, this means that the action of Aut($\bar{S}$) on $\bar{S}$ is equivalent to the action of Aut($S$) on $\bar{S}$. In this section we will use Lubotzky's result to prove the following theorem:

\begin{theorem}\label{quasisimple}
Let $S$ be a universal quasisimple group and let $A$ be a subset of $S$ such that $e\in A$ and $A$ is closed under the action of the automorphism group of $S$. If $S$ has property ($*$) then there exists a word $w\in F_2$ such that $w(S)=A$.
\end{theorem}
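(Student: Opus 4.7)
The plan is to mimic Lubotzky's argument for simple groups, as adapted in the proof of Theorem \ref{almostsimple}, with property ($*$) serving as the extra ingredient that compensates for the presence of a non-trivial centre $Z(S)$. First I would partition the set of ordered pairs $\Omega = \{(a_i, b_i) : i = 1, \ldots, |S|^2\}$ into the subset $\Omega_1$ of generating pairs of $S$ and the complement $\Omega_2$, define $\phi : F_2 \to \prod_\Omega S$ coordinate-wise exactly as in Section 2, and set $H = \phi(F_2) = \langle h_1, h_2 \rangle$. Pairs in $\Omega_2$ generate proper subgroups of $S$, and the observation recalled at the start of this section (a subset of $S$ generating $S$ modulo $Z(S)$ already generates $S$) implies that such a proper subgroup cannot have $\bar{S}$ as a composition factor.

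Next I would establish a subdirect product lemma analogous to Lemma \ref{lemmaabert} in the quasisimple setting. The proofs of Lemmas \ref{Abert1} and \ref{lemmaabert} carry over essentially verbatim once every appeal to ``minimality of $S_j$'' is replaced by the above observation that any subgroup of $S$ with $\bar{S}$ as a composition factor must equal $S$. The outcome is the direct analogue of Proposition \ref{propsubdirectalmostsimple}: $H$ contains a subgroup $D = E_1 \times E_2$ in which $E_1 \leq \prod_{\Omega_1} S$ is a diagonally (twisted by Aut($S$)) embedded copy of $S^r$, where $r = |\Omega_1|/|\textrm{Aut}(S)|$.

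With this subdirect product in hand I would finish the argument as in the proof of Theorem \ref{almostsimple}. For every non-central $a \in A$ the quasisimple form of the Guralnick--Kantor result recorded at the start of this section supplies a $b$ with $(a,b) \in \Omega_1$, so $a$ appears as a first coordinate of some generating pair. Define $v \in \prod_\Omega S$ by setting $v_i = a_i$ whenever $a_i \in A$ is non-central and $v_i = e$ otherwise; the Aut($S$)-invariance of $A$ together with the twisted-diagonal structure of $E_1$ forces $v \in D \leq H$, so $v = \phi(w_0)$ for some $w_0 \in F_2$ with $w_0(S) = (A \setminus Z(S)) \cup \{e\}$.

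The main obstacle is to reach the non-identity central elements of $A$: a central element never arises as a coordinate of any generating pair, so the above construction cannot see it. This is where property ($*$) is designed to enter -- it should provide, for each Aut($S$)-orbit in $A \cap Z(S)$, a fixed word that takes the desired central value on some generating pair and the identity on non-generating pairs. Using the remark following the proof of Theorem \ref{almostsimple} that one is free to prescribe the output word independently on each Aut($S$)-orbit of generating pairs, I would then splice these central adjustments together with $w_0$ into a single word $w \in F_2$ satisfying $w(S) = A$.
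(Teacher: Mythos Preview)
Your adaptation of Lemmas \ref{Abert1} and \ref{lemmaabert} does not go through. The dichotomy ``$R=1$ or $R\geq S_n$'' in the proof of Lemma \ref{lemmaabert} relies on $S$ having no non-trivial proper normal subgroups; for a quasisimple $S$ with $Z(S)\neq 1$ the kernel $R$ may well be a non-trivial subgroup of $Z(S)$, and your proposed replacement (``any subgroup with $\bar S$ as a composition factor equals $S$'') says nothing in that case. Concretely, take two generating pairs $(a,b)$ and $(az_1,bz_2)$ with $z_i\in Z(S)$ not both trivial and not related by an automorphism of $S$. Modulo the centre these coincide, so the image of $H$ in $\bar S\times\bar S$ on these two coordinates is only the diagonal $\bar S$; hence $H$ cannot contain $S\times S$ there. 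Thus your claimed $E_1\cong S^{r}$ with $r=|\Omega_1|/|\mathrm{Aut}(S)|$ is too large and does not sit inside $H$.

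The paper avoids this by first passing to $\bar S=S/Z(S)$ and applying Lubotzky's result to $\bar H$, obtaining a diagonal $\bar S^{r}$ where $r$ counts $\mathrm{Aut}(S)$-independent generating pairs \emph{modulo the centre}; it then lifts each diagonal factor back to $S$ using commutators and perfectness (which simultaneously kills the $\Omega_2$-coordinates and the $Z(S)$-ambiguity). With this corrected $D\cong S^{r}$ in hand, property~($*$) is nothing more than the numerical inequality $r\geq k$, where $k$ is the number of $\mathrm{Aut}(S)$-classes in $A\setminus\{e\}$: one simply picks $r$ representatives $(x_i,y_i)$ and an element of $D$ whose $i$-th diagonal entry is $a_i$ for $i\leq k$ and $e$ otherwise, and any preimage $w\in F_2$ works. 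In particular, central elements of $A$ require no special treatment---they are $\mathrm{Aut}(S)$-classes like any other and are assigned to some $(x_i,y_i)$---so your reading of property~($*$) as a device for reaching central values is off the mark, and the Guralnick--Kantor step you invoke is not actually used in this part of the argument.
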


\noindent The details of property ($*$) will be given later.

\subsection{Proof of \ref{quasisimple}}
Let $S$ be the universal Schur covering group of a finite simple group $\bar{S}$ so that $\bar{S}=S/$Z$(S)$. Let $\Omega=\{(a_i,b_i):i=1,...,|S|^2\}$ be the set of all ordered pairs of elements from $S$ such that the first $l$ pairs generate $S$ whilst the remaining pairs generate proper subgroups of $S$. Suppose that there are $r$ automorphism independent generating pairs modulo the centre, fixing $r$ representatives for these we may assume that these are the first $r$ pairs of $\Omega$, denote this set by $\Omega_r$. Now consider the homomorphism $\phi:F_2\rightarrow\prod_{\Omega}S$ where, as before, $\phi=\prod_{\Omega}\phi_i$ and each $\phi_i$ is the unique homomorphism sending $x$ to $a_i$ and $y$ to $b_i$ with $F_2=<x,y>$. 

Write $\Omega$ as the disjoint union of $\Omega_1$ and $\Omega_2$ where $\Omega_1$ is the set of the first $l$ pairs from $\Omega$ and $\Omega_2$ are the remaining pairs. Set $S_i=\prod_{\Omega_i}S$, $G=\prod_{\Omega}S$, $Z=\prod_{\Omega}$Z$(S)$ and for a subgroup $K$ of $G$ we will denote by $\bar{K}$ the quotient group $KZ/Z$. For $\Lambda\leq\Omega$ denote by $S_{\Lambda}$ the subgroup of $G$ whose projection to the $\alpha$th component of $G$ is $S$ if $\alpha\in\Lambda$ and is the identity elsewhere so that $S_{\Lambda}\cong S^{|\Lambda|}$. When $\Lambda=\{\alpha\}$ we will just write $S_{\alpha}$ for $S_{\Lambda}$. Let $H=<h_1,h_2>\leq G$ where $h_1=(a_i)$ and $h_2=(b_i)$. Then $\bar{H}\leq G/Z=\prod_{\bar{\Omega}}\bar{S}$. Writing $\bar{\Omega}_r$ for the set $\{(\bar{a_i},\bar{b_i}):i=1,...,r\}$, the image of $\Omega_r$ modulo the centre, it follows from Lubotzky \cite{Lubotzky} that $\bar{H}$ contains a subgroup $\bar{D}$ where
$$
\bar{D}=\prod_{(\bar{a},\bar{b})\in\bar{\Omega}_r}D(\bar{S},(\bar{a},\bar{b})),
$$
where $D(\bar(S),(\bar{a},\bar{b}))$ is as in proposition \ref{propsubdirectalmostsimple}.

We claim that $H$ contains a subgroup $D$ isomorphic to $S_{\Omega_r}$. It is enough to prove the following: if $(a,b)=\alpha\in\Omega_r$ and $\bar{H}$ contains $\bar{S}_{\alpha}$ then $H$ contains a diagonally embedded subgroup isomorphic to $S_{\alpha}$ whose projection to the $\alpha$-th component of $\prod_{\Omega}S$ is $S_{\alpha}$. Let $1\neq\bar{x}$, $\bar{y}\in\bar{S}_{\alpha}$ such that $x$, $y\in S_{\alpha}$ and $H$ contains the vectors $(a_{\beta})$ and $(b_{\beta})$ where $a_{\beta}\in$ Z$(S)$ if $\beta\not\in (a$Z$(S),b$Z$(S))$ and $a_{\beta}\in x$Z$(S)$ if $\beta\in (a$Z$(S),b$Z$(S))$. Similarly $b_{\beta}\in$ Z$(S)$ if $\beta\not\in (a$Z$(S),b$Z$(S))$ and $b_{\beta}\in y$Z$(S)$ if $\beta\in (a$Z$(S),b$Z$(S))$. Then $H$ contains their commutator $[(a_{\beta}),(b_{\beta})]=(c_{\beta})$ where $c_{\beta}=e$ if $\beta\neq\alpha$ and $c_{\beta}=[x,y]$ if $\beta\in(a$Z$(S),b$Z$(S))$. Since $S$ is perfect the claim follows. In summary, we have proved the following:


\begin{proposition}\label{propsubdirectquasisimple}
With the notation as above we have
$$
H\geq D=\prod_{(a,b)\in\Omega_r}Q(S,(a,b))\leq \prod_{\Omega}S,
$$
where $Q(S,(a,b))$ is a diagonal subgroup of $\prod_{(c,d)\in\Omega_{(a,b)}^{\text{Aut}(S)}}\prod_{\Omega_{(c,d)}^{Z(S)}}S$ isomorphic to $S$. Here $\Omega_{(a,b)}^{\text{Aut}(S)}$ is the orbit of the pair $(a,b)\in\Omega$ under Aut($S$) and $\Omega_{(c,d)}^{Z(S)}=\{(cz_1,dz_2):z_i\in Z(S)\}\leq\Omega$. More specifically, $Q(S,(a,b))=\{(g^{\phi},...,g^{\phi})_{\phi\in\text{Aut}(S)}\in \prod_{\phi\in\text{Aut}(S)}\prod_{\Omega_{(a^{\phi},b^{\phi})}^{Z(S)}}S:g\in S\}$.
\end{proposition}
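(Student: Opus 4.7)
The plan is to bootstrap from Lubotzky's analysis of the quotient $\bar{H}$ and lift the resulting diagonal subgroups back to $H$ using perfectness of $S$. First I would apply Proposition \ref{propsubdirectalmostsimple} (in its simple-group form, due to Lubotzky) to $\bar{H}\leq\prod_{\bar{\Omega}}\bar{S}$ to conclude that $\bar{H}$ contains $\bar{D}=\prod_{(\bar{a},\bar{b})\in\bar{\Omega}_r}D(\bar{S},(\bar{a},\bar{b}))$, in which each factor is the subgroup of $\prod_{\bar{\Omega}}\bar{S}$ diagonally embedded on precisely those coordinates $\bar\beta$ lying in the $\text{Aut}(\bar{S})$-orbit of $(\bar{a},\bar{b})$. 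Because $\text{Aut}(\bar{S})$ is precisely the image of $\text{Aut}(S)$ acting on $\bar{S}$, the preimage in $\Omega$ of one such orbit is $\Lambda_{(a,b)}:=\bigsqcup_{(c,d)\in\Omega_{(a,b)}^{\text{Aut}(S)}}\Omega_{(c,d)}^{Z(S)}$, which is exactly the index set supporting the claimed $Q(S,(a,b))$.

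The main step is to lift each factor of $\bar{D}$ to a diagonally embedded copy of $S$ inside $H$. Fix $(a,b)\in\Omega_r$ and pick $g,g'\in S$. Using $\bar{D}\leq\bar{H}$, produce $u=(u_\beta),v=(v_\beta)\in H$ whose $\beta$-coordinates lie respectively in $g^\phi Z(S)$ and $(g')^\phi Z(S)$ whenever $\beta\in\Omega_{(a^\phi,b^\phi)}^{Z(S)}$ for some $\phi\in\text{Aut}(S)$, and in $Z(S)$ elsewhere. The commutator $[u,v]=(w_\beta)\in H$ then satisfies $w_\beta=[g,g']^\phi$ in the first case and $w_\beta=e$ otherwise, since any central factor is annihilated by commutation. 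As $g,g'$ vary over $S$, and using perfectness $S=[S,S]$, the subgroup of $H$ generated by these commutators projects onto $S$ in each coordinate $\beta\in\Lambda_{(a,b)}$, with the coordinates on $\Omega_{(a^\phi,b^\phi)}^{Z(S)}$ all equal to a common element $g^\phi$; this subgroup coincides with $Q(S,(a,b))$.

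Finally, since distinct representatives in $\Omega_r$ give disjoint $\text{Aut}(\bar{S})$-orbits in $\bar{\Omega}$ and hence disjoint support sets $\Lambda_{(a,b)}$, the subgroups $Q(S,(a,b))$ for $(a,b)\in\Omega_r$ commute pairwise and their internal product in $H$ is the required direct product $D$. I expect the main obstacle to be not the lifting itself — which is essentially a one-line commutator identity in a central extension — but the combinatorial bookkeeping needed to identify $\text{Aut}(\bar{S})$-orbits in $\bar{\Omega}$ with unions of $Z(S)$-translated $\text{Aut}(S)$-orbits in $\Omega$ and to check that the commutator construction produces genuinely diagonal elements of $H$ whose entries across an entire $\text{Aut}(S)$-orbit are forced to agree after the appropriate twist, rather than elements which merely project surjectively onto each coordinate independently.
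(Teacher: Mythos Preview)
Your proposal is correct and follows essentially the same route as the paper: apply Lubotzky's result to $\bar{H}$ modulo the centre, lift diagonal elements to coset representatives in $H$, and take commutators so that the central ambiguities vanish, then invoke perfectness of $S$ to recover the full diagonal copy. Your account is in fact more explicit than the paper's about the $\text{Aut}(S)$-twisting across coordinates and about identifying an $\text{Aut}(\bar{S})$-orbit in $\bar{\Omega}$ with the union $\Lambda_{(a,b)}$ of $Z(S)$-translated $\text{Aut}(S)$-orbits in $\Omega$; the paper handles this bookkeeping rather tersely.
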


Suppose that $A$ is an Aut($S$)-invariant subset and $A'=A\setminus\{e\}$ has representatives $\{a_1,...,a_k\}$ where $k=k(A)$. Let $\{(x_i,y_i):i=1,..,r\}$ be representatives for Aut$(S)$-independent generating pairs for $S$ that are simultaneously distinct modulo the centre of $S$. Since $\phi(F_2)$ contains a subgroup isomorphic to $S^r$ we require that $r\geq k$. We will then be able to show the existence of a word with the desired property by using the usual argument; define a word $w\in F_2$ such that $w(x_i,y_i)=a_i$ for $i=1,...,k$ and takes the identity elsewhere. We are ready to define property ($*$):\\

\noindent\textit{Property} ($*$): With the notation as above, we say that a group S has property ($*$) if $ r\geq k$ for all $A$. \\

Note that a group $S$ has property ($*$) if and only if the above inequality holds for $A=S$. In the next few sections we will use various results about the probability that a random pair of elements generate a group and bounds on number of conjugacy classes in order to give examples of groups with property ($*$). It is worth pointing out at this stage that the above argument already shows that if $A$ consists of the identity and a single automorphism class then $A$ is the image of a word map.

\subsection{Covers of Alternating Groups}
In this section we continue the above discussion but will focus on the subcase where $\bar{S}$ is an alternating group of degree $n\geq 5$ and $S$ is its Schur covering group. Then for $n\neq 6,7$ we have $S=2.$A$_n$, the double cover of A$_n$, and for $n=6$ or $7$ we have $S=6.$A$_n$. By $\cite{Maroti}$ the number of conjugacy classes, $k(G)$, of a subgroup of S$_n$ with $n\geq 3$ satisfies $k(G)\leq 3^{(n-1)/2}$. It follows that $k(S)\leq 6.3^{(n-1)/2}$. On the other hand by \cite{MarotiTamburini} the probability, $p($A$_n$), that a random pair of elements generate A$_n$ with $n\geq 4$ satisfies $1-1/n-13/n^2<p($A$_n)\leq 1-1/n+2/3n^2$. Hence the number of automorphism independent generating pairs in $S$ $d(S)\geq d(\bar{S})\geq p(\bar{S})|\bar{S}|^2/|\text{Aut}(\bar{S})|=p(\bar{S})|\bar{S}|/|\text{Out}(\bar{S})|$. For $n=5$ we see that $d(2.$A$_5)\geq 9$ whilst $k(2.$A$_5)=k($SL$(2,5))=9$. For $n=6$, as $k(\text{A}_6)=7$, a simple calculation shows that $k(6.$A$_6)\leq 6.7\leq (17/36).(6!/8)\leq d(6.$A$_6)$. By induction, one can show that for $n\geq 7$, $k(S)\leq d(S)$ holds.

\subsection{Groups of Lie type}
Here we consider the case when $S$ is a group of Lie type. As in the case with covers of alternating groups we need good bounds on the number of conjugacy classes and the probability that a random pair of elements generates the group. We will first deal with special linear groups and then look at the general case. For more details about groups of Lie type see, for example, \cite{Atlas} or \cite{GLS}.

\subsubsection{Special Linear Groups}
S will denote SL($n,q$) a special linear group of degree $n$ over a finite field of size $q=p^r$ of characteristic $p$ and so $\bar{S}=$PSL($n,q$). From \cite{GuralnickKantor} we have that the probability that a random pair of elements of $\bar{S}$ generate the group $p(\bar{S})\geq 1-cn^3(\log_{2}q)^2/q^{n-1}$ where $c=36$ for $n\geq 10$ and $c=10^{10}$ if $n\leq 9$. It follows that the number of automorphism independent generating pairs $d(S)\geq d(\bar{S})\geq p(\bar{S})|\bar{S}|^2/|$Aut$(\bar{S})|=p(\bar{S})|\bar{S}|/|$Out$(\bar{S})|$. Now $|$Out$(\bar{S})|\leq 2(n,q-1)\log_{p}q$ and $|\bar{S}|=q^{{n \choose 2}}\prod_{i=2}^{n}(q^i-1)/(n,q-1)$. Putting all this together we have
$$
d(S)\geq \frac{(q^{n-1}-cn^3(\log_{2}q)^2)q^{{n \choose 2}}\prod_{i=2}^{n}(q^i-1)}{2q^{n-1}(n,q-1)^2\log_{p}q}.
$$
On the other hand a result of Fulman and Guralnick, see \cite{FulmanGuralnick}, says that the number of conjugacy classes $k(S)\leq q^{n-1}+3q^{n-2}$. Suppose $n\geq 10$ then we have $d(S)\geq k(S)$ for all $q\geq 4$, for $n\leq 9$ we require that $q\geq 10^{15}$.

In the case where $n=2$ and $q=p$, a prime, we can make the following estimates (see   \cite{KantorLubotzky}):
\begin{align*}
p(S) &\geq  1-1/p-10/p^2;\\
k(S) &\leq  p+4.
\end{align*}
Putting all this together we have $d(S)\geq (p^2-p-10)(p^3-p)/p^2(2,p-1)^2\geq p+4\geq k(S)$ for all $p\geq 5$.

\subsubsection{Other groups of Lie type}
The following estimate for the probability that a pair of elements generates a group of Lie type, $S$, was found by Liebeck and Shalev \cite{LiebeckShalev} answering a question of Kantor and Lubotzky \cite{KantorLubotzky}:
$$
1-p(S)=O(rk(S)^3(\log_2q)^2q^{-rk(S)}).
$$
Here, $rk(S)$ is the untwisted Lie rank of $S$, the rank of the corresponding simple algebraic groups over an algebraic closure of $\mathbb{F}_q$. 
Combining this with a result of Fulman and Guralnick \cite{FulmanGuralnick}, $k(S) \leq 27.2q^{rk(S)}$, we see that for `large enough' $S$, $d(S)\geq k(S)$.\\

Theorem B follows by the CFSG.

\section{Dihedral Groups}

\section{$p$-Groups}

\section{Acknowledgments}
This paper is based on results obtained by the author while studying towards a PhD at Imperial College London under the supervision of Nikolay Nikolov. The author would like to thank Nikolay Nikolov for introducing him to a number of problems associated with word maps and for his support and guidance. The author is also thankful for the financial support provided by EPSRC and Imperial College London during his studies.

\bibliographystyle{plain}
\bibliography{refs}

\end{document}